\newcommand*{\mailto}[1]{\href{mailto:#1}{\nolinkurl{#1}}}
\newcommand{\arxiv}[1]{\href{http://arxiv.org/abs/#1}{arXiv: #1}}
\def\theequation{\@arabic\c@equation}
\newcommand{\bbN}{{\mathbb{N}}}
\newcommand{\bbR}{{\mathbb{R}}}
\newcommand{\bbS}{{\mathbb{S}}}
\renewcommand{\a}{\alpha}
\newcommand{\g}{\gamma}
\newcommand{\no}{\nonumber}
\newcommand{\lb}{\label}
\newcommand{\bi}{\bibitem}
\newcommand{\ol}{\overline}
\newcommand{\bs}{\backslash}
\newcommand{\dott}{\,\cdot\,}
\newcommand{\supp}{\operatorname{supp}}
\renewcommand{\ln}{\operatorname{ln}}
\numberwithin{equation}{section}
\newtheorem{theorem}{Theorem}[section]
\newtheorem{lemma}[theorem]{Lemma}
\theoremstyle{definition}
\newtheorem{remark}[theorem]{Remark}
\begin{document}

\title[Log Refinements of a Power Weighted Hardy--Rellich-Type Inequality]{Logarithmic Refinements of a Power Weighted Hardy--Rellich-Type Inequality}

\author[F.\ Gesztesy]{Fritz Gesztesy}
\address{Department of Mathematics, 
Baylor University, Sid Richardson Bldg., 1410 S.~4th Street, Waco, TX 76706, USA}
\email{\mailto{Fritz\_Gesztesy@baylor.edu}}
\urladdr{\url{http://www.baylor.edu/math/index.php?id=935340}}

\author[M.\ M.\ H.\ Pang]{Michael M.\ H.\ Pang}
\address{Department of Mathematics,
University of Missouri, Columbia, MO 65211, USA}
\email{\mailto{pangm@missouri.edu}}
\urladdr{\url{https://www.math.missouri.edu/people/pang}}

\author[J.\ Stanfill]{Jonathan Stanfill}
\address{Department of Mathematics, The Ohio State University \\
100 Math Tower, 231 West 18th Avenue, Columbus, OH 43210, USA}
\email{\mailto{stanfill.13@osu.edu}}
\urladdr{\url{https://u.osu.edu/stanfill-13/}}

\dedicatory{Dedicated, with admiration, to Fedor Sukochev, mathematician extraordinaire} 
\date{\today}
\thanks{To appear in Ann. Funct. Anal.}
\@namedef{subjclassname@2020}{\textup{2020} Mathematics Subject Classification}
\subjclass[2020]{Primary: 35A23, 35J30; Secondary: 47A63, 47F05.}
\keywords{Weighted Hardy--Rellich-type inequality, logarithmic refinements.}

\begin{abstract}
The principal purpose of this note is to prove a logarithmic refinement of the power weighted Hardy--Rellich inequality on $n$-dimensional balls, valid for the largest variety of underlying parameters and for all dimensions $n \in \bbN$, $n\geq 2$.  
\end{abstract}

\maketitle


\section{Introduction} \lb{s1}

In the recent paper \cite{GPPS24} we reconsidered the following sharp inequality, first derived by Caldiroli and Musina \cite[Theorem~3.1]{CM12}, 
\begin{align}\lb{1.1}
\begin{split}
\int_{\bbR^n} |x|^\g|(\Delta f)(x)|^2 \, d^n x 
 \geq C_{n,\g} \int_{\bbR^n} |x|^{\g-4} |f(x)|^2 \, d^n x,& \\
 \g\in\bbR, \; f \in C^{\infty}_0(\bbR^n \backslash \{0\}), \; n\in\bbN, \, n\geq2,&
 \end{split} 
\end{align}
where
\begin{equation}\lb{1.2}
C_{n,\g}=\min_{j\in\bbN_0}\bigg\{\bigg(\frac{(n-2)^2}{4}-\frac{(\g-2)^2}{4}+j(j+n-2)\bigg)^2\bigg\}.
\end{equation}
In addition, we also derived the sharp inequality (sometimes called the Hardy--Rellich inequality), 
\begin{align}\lb{1.3}
\begin{split} 
\int_{\bbR^n} |x|^\gamma|(\Delta f)(x)|^2\, d^nx\geq A_{n,\gamma}\int_{\bbR^n}|x|^{\gamma-2}|(\nabla f)(x)|^2\, d^nx,&  \\
\gamma \in \bbR, \; f\in C_0^\infty(\bbR^n\backslash \{0\}),\; n\in\bbN,\, n\geq 2,&
\end{split} 
\end{align}
where
\begin{equation}\lb{1.4}
A_{n,\gamma}={\min}_{j \in \bbN_0}\{\alpha_{n,\gamma,\lambda_j}\},
\end{equation}
with
\begin{align}
\begin{split}
\alpha_{n,\gamma,\lambda_0}&=\alpha_{n,\gamma,0}=4^{-1}(n-\gamma)^2,     \lb{1.5} \\
\alpha_{n,\gamma,\lambda_j}&=\big[4^{-1}(n+\gamma-4)(n-\gamma)+\lambda_j\big]^2\Big/\big[4^{-1}(n+\gamma-4)^2+\lambda_j\big],\quad j\in\bbN. 
\end{split}
\end{align}

In the unweighted case $\gamma = 0$ this simplifies to the known fact,
\begin{equation}
A_{n,0} = \begin{cases}  n^2/4, & n \geq 5, \\
3, & n=4, \\
25/36, & n=3, \\
0, & n=2.
\end{cases}      \lb{1.6}
\end{equation}

In the special case where $C_{n,\gamma}$ in \eqref{1.1}, or $A_{n,\gamma}$ in \eqref{1.3}, vanishes, the resulting inequality is rendered trivial (e.g., there is no nontrivial inequality of the type \eqref{1.3} in the case $n=2$, $\gamma = 0$) and hence one wonders about the possibility of logarithmically refining these inequalities to prevent them from becoming insignificant.  

In this connection we recall that logarithmic refinements of \eqref{1.1} were already known. Indeed, as discussed in \cite{CM12}, whenever, $4^{-1} \big[(\gamma - 2)^2 - (n-2)^2\big]$ equals one of the eigenvalues of $- \Delta_{\bbS^{n-1}}$ (i.e., one of the numbers $j(j+n-2)$, $j \in \bbN_0$), then $C_{n,\gamma}$ vanishes, rendering inequality \eqref{1.1} trivial. In this context we recall the following result from \cite[Theorem~1.3]{GMP24}: 
\begin{align}
\begin{split} 
&\int_{B_n(0;R)} |x|^{\gamma} |(-\Delta f)(x)|^{2} \, d^{n}x \geq C_{n,\gamma} \int_{B_n(0;R)}  
|x|^{\gamma-4} | f(x)|^{2} \, d^{n}x    \\
&\quad + \big\{\big[ (n-\gamma)^{2} + (n+\gamma-4)^{2} \big]\big/16\big\}    \lb{1.7} \\
&\qquad \times \int_{B_n(0;R)} |x|^{\gamma-4}  \Bigg(\sum_{k=1}^{N} \prod_{p=1}^{k} [\ln_{p}(\eta/|x|)]^{-2} \Bigg)|f(x)|^{2} \, d^{n}x,  \\
& \, R \in (0,\infty), \; \gamma \in \bbR, \; N \in \bbN, \; \eta \in [e_{N}R,\infty), \; 
f \in C_{0}^{\infty}(B_n(0;R)\bs\{0\}), 
\end{split} 
\end{align}
which yields an appropriate logarithmic refinement even if $C_{n,\gamma}$ vanishes. Here $B_n(0;R)$ denotes the open ball in $\bbR^n$, $n \in \bbN$, $n \geq 2$, centered at the origin $0$ of radius $R \in (0,\infty)$, the iterated logarithms $\ln_k( \, \cdot \, )$, $k \in \bbN$, are given by
\begin{equation} 
\ln_1(\, \cdot \,) = \ln( \, \cdot \, ), \quad \ln_{k+1}( \, \cdot \,) = \ln \big( \ln_k(\, \cdot \,) \big), \quad k \in \bbN,  
\lb{1.8} 
\end{equation} 
and the iterated exponentials $e_j$, $j \in \bbN_0$, are introduced via 
\begin{equation}
e_0 = 0, \quad e_{j+1} = e^{e_j}, \quad j \in \bbN_{0}.      \lb{1.9} 
\end{equation} 

Given the logarithmic refinement \eqref{1.7} of \eqref{1.1}, it is natural to ask if a corresponding analogous  logarithmic refinement of \eqref{1.3} exists that prevents it from becoming insignificant if $A_{n,\gamma}$ vanishes. Answering this question in the affirmative is the principal purpose of this note. In particular, we will prove the following inequality in Theorem \ref{t2.3}:
\begin{align} 
\begin{split}
& \int_{B_n(0;R)} |x|^\g |(-\Delta f)(x)|^2\, d^n x \geq A_{n,\g} \int_{B_n(0;R)} |x|^{\g-2} |(\nabla f)(x)|^2\, d^n x  \\
& \quad + 4^{-1}\int_{B_n(0;R)} |x|^{\g-2} \Bigg(\sum_{k=1}^{N} \prod_{p=1}^{k} [\ln_{p}(\eta/|x|)]^{-2} \Bigg)|(\nabla f)(x)|^2\, d^n x    \lb{1.10} \\
& \quad + 4^{-1}\int_{B_n(0;R)}|x|^{\gamma-4}\Bigg(\sum_{k=1}^{N} \prod_{p=1}^{k} [\ln_{p}(\eta/|x|)]^{-2} \Bigg)|(\nabla_{\bbS^{n-1}} f)(x)|^2\, d^n x,   \\
& \, R \in (0,\infty), \; \gamma \in \bbR, \; N, n \in \bbN, \, n\geq2, \; \eta \in [e_{N}R,\infty), \; 
f \in C_{0}^{\infty}(B_n(0;R)\bs\{0\}).
\end{split} 
\end{align}
Once again, this inequality remains meaningful even if $A_{n,\g}$ vanishes. 

Given the enormity of the literature on (power weighted) Rellich and Hardy--Rellich-type inequalities, we will not repeat the extensive list (still necessarily incomplete) of references cited in \cite{GPPS24}, and so refer the reader to the latter. However, more specifically, we mention that Caldiroli and Musina \cite{CM12} proved in 2012 that the constant $C_{n,\gamma}$ in \eqref{1.1} is optimal. (For various restricted ranges of $\gamma$ see also Adimurthi, Grossi, and Santra \cite{AGS06}, Ghoussoub and Moradifam \cite{GM11}, \cite[Sects.~6.3, 6.5, Ch.~7]{GM13}, and Tertikas and Zographopoulos \cite{TZ07}.) The special unweighted case $\gamma =0$ was settled for $n \geq 5$ by Herbst \cite{He77} in 1977 and subsequently by Yafaev \cite{Ya99} in 1999 for $n \geq 3$, $n \neq 4$ (both authors consider much more general fractional inequalities).

Under various restrictions on $\gamma$, Tertikas and Zographopoulos \cite{TZ07} obtained in 2007 optimality of $A_{n,\gamma}$ for $n \geq 5$ and $\bbR^n$ replaced by appropriate open bounded domains $\Omega$ with $0 \in \Omega$. This is revisited in Ghoussoub and Moradifam \cite{GM11}, \cite[Part~2]{GM13}. 
Similarly, Tertikas and Zographopoulos \cite{TZ07} obtained optimality of $A_{n,0}$ for $n \geq 5$; Beckner \cite{Be08a} (see also \cite{Be08}), and subsequently, Ghoussoub and Moradifam \cite{GM11}, \cite[Sects.~6.3, 6.5, Ch.~7]{GM13} and Cazacu \cite{Ca20}, obtained optimality of $A_{n,0}$ for $n \geq 3$. 

As a notational comment we remark that we abbreviate $\bbN_0 = \bbN \cup \{0\}$, and denote by ${\bbS^{n-1}}$ the  unit sphere in $\bbR^n$, $n \in \bbN$, $n \geq 2$.

\section{A logarithmically Modifed Hardy--Rellich-type Inequality} \lb{s2}

We begin by recalling the following simplified version of \cite[Theorem 3.1 $(iii)$]{GLMP22}.

\begin{lemma}\lb{l2.1}
Let $R\in(0,\infty)$, $\a\in\bbR$, $N\in\bbN$, $\eta\in[e_N R,\infty)$, and $f\in C_0^\infty ((0,R))$. Then
\begin{align}
\begin{split}\lb{2.1a}
\int_0^R r^\a |f'(r)|^2\, dr&\geq 4^{-1}(1-\a)^2 \int_0^R r^{\a-2} |f(r)|^2\, dr\\
&\quad\, +4^{-1}\int_0^R r^{\a-2} \Bigg(\sum_{k=1}^{N} \prod_{p=1}^{k} [\ln_{p}(\eta/r)]^{-2} \Bigg)|f(r)|^2\, dr,
\end{split}
\end{align}
where the iterated logarithms $\ln_k( \, \cdot \, )$, $k \in \bbN$, are given by
\begin{equation} 
\ln_1(\, \cdot \,) = \ln( \, \cdot \, ), \quad \ln_{k+1}( \, \cdot \,) = \ln \big( \ln_k(\, \cdot \,) \big), \quad k \in \bbN, 
\end{equation} 
and the iterated exponentials $e_j$, $j \in \bbN_0$, are introduced via 
\begin{equation}
e_0 = 0, \quad e_{j+1} = e^{e_j}, \quad j \in \bbN_{0}.
\end{equation} 
\end{lemma}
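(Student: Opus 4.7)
The plan is to establish the inequality by the classical ``virtual ground state'' (or factorization) method standard for refined Hardy-type inequalities. Introduce the strictly positive, smooth function
\begin{equation*}
\phi(r) := r^{(1-\a)/2} \prod_{k=1}^N [\ln_k(\eta/r)]^{1/2}, \qquad r \in (0,R).
\end{equation*}
The hypothesis $\eta \in [e_N R,\infty)$ guarantees $\ln_k(\eta/r) > 0$ for every $k \in \{1,\dots,N\}$ and every $r$ in the compact support of $f$, so $\phi \in C^\infty$ is strictly positive there and $g := f/\phi$ is a legitimate test function in $C_0^\infty((0,R))$.

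Starting from $f' = \phi' g + \phi g'$, squaring, multiplying by $r^\a$, integrating over $(0,R)$, integrating the cross term $r^\a \phi \phi' (g^2)'$ by parts (the boundary contributions vanish since $g$ has compact support), and simplifying via the product rule identity $(r^\a \phi \phi')' = r^\a (\phi')^2 + \phi\,(r^\a \phi')'$, I arrive at the standard factorization
\begin{equation*}
\int_0^R r^\a |f'(r)|^2 \, dr \;=\; \int_0^R \frac{-(r^\a \phi'(r))'}{\phi(r)}\, |f(r)|^2\, dr \;+\; \int_0^R r^\a \phi(r)^2 |g'(r)|^2\, dr.
\end{equation*}
Since the last integral is nonnegative, the lemma reduces to verifying the pointwise equality
\begin{equation*}
\frac{-(r^\a \phi'(r))'}{\phi(r)} \;=\; \frac{(1-\a)^2\, r^{\a-2}}{4} \;+\; \frac{r^{\a-2}}{4}\sum_{k=1}^N \prod_{p=1}^k [\ln_p(\eta/r)]^{-2}.
\end{equation*}

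The remaining step is a direct but delicate computation, and is where I expect the main obstacle to lie. Using the chain-rule relation $\tfrac{d}{dr}\ln_k(\eta/r) = -\bigl(r \prod_{j=1}^{k-1}\ln_j(\eta/r)\bigr)^{-1}$ one first obtains $\phi'/\phi = (1-\a)/(2r) - (2r)^{-1}\sum_{k=1}^N T_k(r)$, where $T_k(r) := \prod_{j=1}^k [\ln_j(\eta/r)]^{-1}$. Substituting this into $-(r^\a \phi')'/\phi = -\a r^{\a-1}(\phi'/\phi) - r^\a (\phi'/\phi)' - r^\a (\phi'/\phi)^2$, one faces a proliferation of cross products $T_i T_k$ with $i \ne k$: they arise from $(\phi'/\phi)^2$ through the expansion $\bigl(\sum_k T_k\bigr)^2 = \sum_k T_k^2 + 2\sum_{i<k}T_i T_k$, and from $r\,\partial_r \sum_k T_k$ via the derivative identity $T_k' = (T_k/r)\sum_{i=1}^k T_i$, which gives $r\sum_k T_k' = \sum_k T_k^2 + \sum_{i<k} T_i T_k$. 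The decisive algebraic point is that, weighted by the coefficients produced by the rearrangement, all off-diagonal contributions cancel exactly, leaving only the diagonal sum $\sum_k T_k^2 = \sum_{k=1}^N \prod_{p=1}^k [\ln_p(\eta/r)]^{-2}$ together with $(1-\a)^2/(4r^2)$. This yields the desired identity and completes the proof.
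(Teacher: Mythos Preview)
Your proof is correct and is essentially the same approach as the paper's: the paper's first-order operator $T_{N,\alpha}=r^{\alpha/2}\bigl(\tfrac{d}{dr}+\tfrac{\alpha-1}{2r}+\tfrac{1}{2r}\sum_{k=1}^N T_k\bigr)$ satisfies $T_{N,\alpha}=r^{\alpha/2}\phi\,\tfrac{d}{dr}\,\phi^{-1}$ with your ground state $\phi$, so $\int_0^R|T_{N,\alpha}f|^2\,dr=\int_0^R r^\alpha\phi^2|g'|^2\,dr$ and the two computations coincide term by term. Your write-up simply makes explicit the off-diagonal cancellation $\tfrac12 rS'-\tfrac14 S^2=\tfrac14\sum_k T_k^2$ that the paper subsumes under ``integration by parts and combining similar terms.''
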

\begin{proof}
As the current investigation came about while studying factorizations in \cite{GPPS24}, we provide a factorization proof of this lemma in the spirit of \cite{GPPS24} (see also \cite{GLMP18} for related higher dimensional unweighted factorizations with log refinements).

Given $R\in(0,\infty)$, $\a\in\bbR$, $N\in\bbN$, $\eta\in[e_N R,\infty)$, one defines the differential expression
\begin{equation}
T_{N,\a}=r^{\a/2}\frac{d}{dr}+\frac{\a-1}{2} r^{(\a-2)/2}+\frac{1}{2}r^{(\a-2)/2}\sum_{k=1}^{N} \prod_{p=1}^{k} [\ln_{p}(\eta/r)]^{-1},\quad r\in(0,R).
\end{equation}
Then, after applying appropriate integration by parts and combining similar terms, one confirms that for $f\in C_0^\infty((0,R))$,
\begin{align}
\begin{split} 
0\leq\int_0^R |(T_{N,\a}f)(r)|^2\, dr&= \int_0^R r^\a |f'(r)|^2\, dr-4^{-1}(1-\a)^2 \int_0^R r^{\a-2} |f(r)|^2\, dr\\
&\quad\, -4^{-1}\int_0^R r^{\a-2} \Bigg(\sum_{k=1}^{N} \prod_{p=1}^{k} [\ln_{p}(\eta/r)]^{-2} \Bigg)|f(r)|^2\, dr,
\end{split} 
\end{align}
proving \eqref{2.1a}.
\end{proof}

Before deriving our next result, we recall some standard notation and facts. 
Let $\mathbb{S}^{n-1}$ denote the $(n-1)$-dimensional unit sphere in $\bbR^n,\ n\in\bbN,\ n\geq 2,$ with $d^{n-1}\omega:=d^{n-1}\omega(\theta)$ the usual volume measure on $\mathbb{S}^{n-1}$. We denote by $-\Delta_{\mathbb{S}^{n-1}}$ the nonnegative, self-adjoint Laplace--Beltrami operator in $L^2(\mathbb{S}^{n-1};d^{n-1}\omega)$. Let
\begin{equation}
\lambda_j=j(j+n-2),\quad j\in\bbN_0,
\end{equation}
be the eigenvalues of $-\Delta_{\bbS^{n-1}}$, that is, $\sigma (-\Delta_{\bbS^{n-1}}) = \{j(j+n-2)\}_{j \in \bbN_0}$, of multiplicity
\begin{equation}
m(\lambda_j)=(2j+n-2)(j+n-2)^{-1} {j+n-2 \choose n-2},\quad j\in\bbN_0,
\end{equation}
with corresponding eigenfunctions $\varphi_{j,\ell},\ j\in\bbN_0,\ \ell\in\{1,\dots,m(\lambda_j)\}$. We may (and will) assume that $\{\varphi_{j,\ell}\}_{j\in\bbN_0,\ \ell\in\{1,\dots,m(\lambda_j)\}}$ is an orthonormal basis of $L^2({\bbS^{n-1}};d^{n-1}\omega)$, and let
\begin{align}
\begin{split}
F_{f,j,\ell}(r)=(\varphi_{j,\ell},f(r,\dott))_{L^2({\bbS^{n-1}};d^{n-1}\omega)}=\int_{\bbS^{n-1}} \overline{\varphi_{j,\ell}(\theta)}  f(r,\theta) \, d^{n-1}\omega(\theta),&\\
f\in C_0^\infty(\bbR^n\backslash\{0\}),\quad r>0,\quad j\in\bbN_0,\ \ell\in\{1,\dots,m(\lambda_j)\}.&
\end{split}
\end{align}
Finally, let $B_n(0;R)$ denote the open ball in $\bbR^n$, $n \in \bbN$, $n \geq 2$, centered at the origin $0$ of radius $R \in (0,\infty)$. 

We are now in the position to prove the following lemma which will be combined with Lemma \ref{l2.1} to prove our main result.

\begin{lemma}\lb{l2.2}
Let $R\in(0,\infty)$, $f\in C_0^\infty(B_n(0;R)\backslash\{0\})$, and $g\in C((0,R))$ satisfy $g(r)>0$ for all $r\in(0,R)$. Then
\begin{equation}\lb{2.4a}
\int_{B_n(0;R)} g(|x|)|(\nabla f)(x)|^2\, d^n x=\sum_{j=0}^\infty \sum_{\ell=1}^{m(\lambda_j)}\int_0^R g(r)\big[|F'_{f,j,\ell}(r)|^2 r^{n-1}+\lambda_j|F_{f,j,\ell}(r)|^2 r^{n-3}\big]\, dr.
\end{equation}
\end{lemma}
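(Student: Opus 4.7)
\textbf{Proof plan for Lemma \ref{l2.2}.}

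The plan is to pass to spherical coordinates $x = r\theta$, $r \in (0,R)$, $\theta \in \bbS^{n-1}$, with $d^nx = r^{n-1}\,dr\,d^{n-1}\omega(\theta)$, and then to expand the angular dependence of $f$ in the orthonormal basis $\{\varphi_{j,\ell}\}$ of $L^2(\bbS^{n-1};d^{n-1}\omega)$. The key ingredient is the decomposition of the Euclidean gradient into a radial and a spherical part, namely
\begin{equation}
|(\nabla f)(r\theta)|^2 = |(\partial_r f)(r\theta)|^2 + r^{-2}|(\nabla_{\bbS^{n-1}} f)(r\theta)|^2,
\end{equation}
so that
\begin{equation}
\int_{B_n(0;R)} g(|x|)|(\nabla f)(x)|^2\,d^n x = \int_0^R g(r) r^{n-1} \bigg[\int_{\bbS^{n-1}} |(\partial_r f)(r\theta)|^2\, d^{n-1}\omega \bigg]\, dr + (\text{angular piece}),
\end{equation}
where the angular piece equals $\int_0^R g(r) r^{n-3} \int_{\bbS^{n-1}}|(\nabla_{\bbS^{n-1}} f)(r\theta)|^2 \, d^{n-1}\omega\, dr$.

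Next I would use the expansion $f(r,\theta) = \sum_{j=0}^\infty \sum_{\ell=1}^{m(\lambda_j)} F_{f,j,\ell}(r)\varphi_{j,\ell}(\theta)$, which for $f \in C_0^\infty(B_n(0;R)\bs\{0\})$ converges in $C^\infty$ on $\bbS^{n-1}$ uniformly in $r$ staying in any compact subset of $(0,R)$; this follows from the rapid decay of $F_{f,j,\ell}(r)$ in $j$, a consequence of the smoothness of $f$ and of the fact that $\varphi_{j,\ell}$ is an eigenfunction of $-\Delta_{\bbS^{n-1}}$ with eigenvalue $\lambda_j = j(j+n-2)$. Termwise differentiation in $r$ yields $(\partial_r f)(r,\theta) = \sum_{j,\ell} F_{f,j,\ell}'(r) \varphi_{j,\ell}(\theta)$, so Parseval's identity on $\bbS^{n-1}$ gives $\int_{\bbS^{n-1}} |(\partial_r f)(r\theta)|^2 \, d^{n-1}\omega = \sum_{j,\ell} |F_{f,j,\ell}'(r)|^2$.

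For the spherical term I would integrate by parts on $\bbS^{n-1}$ (no boundary contribution) to obtain
\begin{equation}
\int_{\bbS^{n-1}} |(\nabla_{\bbS^{n-1}} f)(r\theta)|^2 \, d^{n-1}\omega = \int_{\bbS^{n-1}} \overline{f(r\theta)}\, (-\Delta_{\bbS^{n-1}} f)(r\theta)\, d^{n-1}\omega,
\end{equation}
and then apply the eigenvalue relation $-\Delta_{\bbS^{n-1}} \varphi_{j,\ell} = \lambda_j \varphi_{j,\ell}$ together with the orthonormality of $\{\varphi_{j,\ell}\}$ to rewrite this as $\sum_{j,\ell}\lambda_j |F_{f,j,\ell}(r)|^2$. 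Combining the two contributions with the radial measure $r^{n-1}\,dr$ and $r^{n-3}\,dr$ respectively, and interchanging the (absolutely convergent, compactly supported) sum with the radial integral, produces \eqref{2.4a}. The only real technical point is justifying the termwise radial differentiation of the spherical expansion and the interchange of summation and integration; since $f$ is smooth with support compact in $(0,R)\times \bbS^{n-1}$, the coefficients $F_{f,j,\ell}(r)$ and all their derivatives decay faster than any power of $(1+\lambda_j)^{-1}$, uniformly in $r$, so both interchanges are routine. This is the step I expect to require the most care, but it is standard.
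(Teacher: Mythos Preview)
Your proposal is correct and follows essentially the same approach as the paper: pass to polar coordinates, use the radial/spherical decomposition \eqref{2.5a} of $|\nabla f|^2$, expand in spherical harmonics, apply Parseval for the radial piece and integration by parts plus the eigenvalue relation for the angular piece, and then combine. The paper outsources the justification of the termwise differentiation and interchange of sums with integrals to an external lemma, whereas you sketch the standard rapid-decay argument directly, but the substance is identical.
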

\begin{proof}
We begin by recalling that
\begin{equation}\lb{2.5a}
|(\nabla f)(x)|^2=|(\partial f/\partial r)(r,\theta)|^2+r^{-2}|(\nabla_{\bbS^{n-1}} f(r,\dott))(\theta)|^2, 
\end{equation}
where $\nabla_{\bbS^{n-1}}$ denotes the gradient operator on ${\bbS^{n-1}}$.
Thus applying \eqref{2.5a} and \cite[Lemma 2.1]{GMP24} yields
\begin{align}
& \int_{B_n(0;R)} g(|x|)|(\nabla f)(x)|^2\, d^n x =\int_0^R g(r)\int_{\bbS^{n-1}} |(\nabla f)(r,\theta)|^2 \, d^{n-1}\omega(\theta) \, r^{n-1}dr  \no \\
&\quad =\int_0^R g(r)\int_{\bbS^{n-1}} \big[|(\partial f/\partial r)(r,\theta)|^2 \no \\
&\quad \hspace{2.7cm} +r^{-2}|(\nabla_{\bbS^{n-1}} f(r,\dott))(\theta)|^2\big] \, d^{n-1}\omega(\theta) \, r^{n-1}dr  \no \\
&\quad =\int_0^R g(r)\Bigg\{ \int_{\bbS^{n-1}} \Bigg|\sum_{j=0}^\infty \sum_{\ell=1}^{m(\lambda_j)} F'_{f,j,\ell}(r)\varphi_{j,\ell}(\theta)\Bigg|^2 \, d^{n-1}\omega(\theta)  \no \\
& \quad \hspace{1.8cm} + r^{-2}\int_{\bbS^{n-1}} \ol{(-\Delta_{\bbS^{n-1}} f)(r,\theta)} f(r,\theta) \, d^{n-1}\omega(\theta)\Bigg\}\, r^{n-1} dr  \no \\
&\quad =\int_0^R g(r)\Bigg\{ \sum_{j=0}^\infty \sum_{\ell=1}^{m(\lambda_j)} |F'_{f,j,\ell}(r)|^2     \lb{2.11b} \\
&\qquad\, + r^{-2}\int_{\bbS^{n-1}}\Bigg[\sum_{j=0}^\infty \sum_{\ell=1}^{m(\lambda_j)}\lambda_j \ol{F_{f,j,\ell}(r)\varphi_{j,\ell}(\theta)}\Bigg] \no\\
&\quad \hspace{2.1cm}\times\Bigg[\sum_{j=0}^\infty \sum_{\ell=1}^{m(\lambda_j)} F_{f,j,\ell}(r)\varphi_{j,\ell}(\theta)\Bigg] \, d^{n-1}\omega(\theta)\Bigg\}\, r^{n-1} dr  \no \\
&\quad = \sum_{j=0}^\infty \sum_{\ell=1}^{m(\lambda_j)}\int_0^R g(r)\big[|F'_{f,j,\ell}(r)|^2 +\lambda_j r^{-2} |F_{f,j,\ell}(r)|^2\big]\, r^{n-1} dr,  \no
\end{align}
proving \eqref{2.4a}
\end{proof}

Explicitly, \eqref{2.11b} yields
\begin{align}
\begin{split} 
& \int_0^R g(r)\int_{\bbS^{n-1}} |(\partial f/\partial r)(r,\theta)|^2 \, d^{n-1}\omega(\theta) \, r^{n-1}dr   \lb{2.12b} \\
& \quad =
\sum_{j=0}^\infty \sum_{\ell=1}^{m(\lambda_j)}\int_0^R g(r) |F'_{f,j,\ell}(r)|^2 \, r^{n-1} dr,   
\end{split} \\
\begin{split} 
& \int_0^R g(r)\int_{\bbS^{n-1}} r^{-2}|(\nabla_{\bbS^{n-1}} f(r,\dott))(\theta)|^2 \, d^{n-1}\omega(\theta) \, r^{n-1}dr  
\lb{2.13b} \\
& \quad = 
\sum_{j=0}^\infty \sum_{\ell=1}^{m(\lambda_j)} \lambda_j \int_0^R g(r) r^{-2} |F_{f,j,\ell}(r)|^2\, r^{n-1} dr. 
\end{split} 
\end{align}

The previous results now allow us to prove the main result in this note in the form of the following Hardy--Rellich-type inequality with logarithmic refinements.

\begin{theorem}\lb{t2.3}
Let $R\in(0,\infty)$, $\g\in\bbR$, $N, n\in\bbN$, with $n\geq2$, $\eta\in[e_N R,\infty)$, and $f\in C_0^\infty (B_n(0;R)\backslash\{0\})$. Then
\begin{align} 
& \int_{B_n(0;R)} |x|^\g |(-\Delta f)(x)|^2\, d^n x \geq A_{n,\g} \int_{B_n(0;R)} |x|^{\g-2} |(\nabla f)(x)|^2\, d^n x \no \\
& \quad + 4^{-1}\int_{B_n(0;R)} |x|^{\g-2} \Bigg(\sum_{k=1}^{N} \prod_{p=1}^{k} [\ln_{p}(\eta/|x|)]^{-2} \Bigg)|(\nabla f)(x)|^2\, d^n x    \lb{2.7a} \\
& \quad + 4^{-1}\int_{B_n(0;R)}|x|^{\gamma-4}\Bigg(\sum_{k=1}^{N} \prod_{p=1}^{k} [\ln_{p}(\eta/|x|)]^{-2} \Bigg)|(\nabla_{\bbS^{n-1}} f)(x)|^2\, d^n x,   \no 
\end{align}
where
\begin{equation}\lb{2.8a}
A_{n,\gamma}={\min}_{j \in \bbN_0}\{\alpha_{n,\gamma,\lambda_j}\},
\end{equation}
with
\begin{align}
\begin{split}
\alpha_{n,\gamma,\lambda_0}&=\alpha_{n,\gamma,0}=4^{-1}(n-\gamma)^2,     \lb{2.8b} \\
\alpha_{n,\gamma,\lambda_j}&=\big[4^{-1}(n+\gamma-4)(n-\gamma)+\lambda_j\big]^2\Big/\big[4^{-1}(n+\gamma-4)^2+\lambda_j\big],\quad j\in\bbN. 
\end{split}
\end{align} 
Excluding the cases $(\alpha)$ $n=2,\ \gamma=2$ and $(\beta)$ $n=3,\ \gamma=1$, the constant $A_{n,\gamma}$ on the right-hand side of inequality \eqref{2.7a} is optimal. 
\end{theorem}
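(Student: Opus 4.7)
The plan is to decompose $f$ into spherical harmonics, reducing \eqref{2.7a} to a family of one-dimensional inequalities indexed by $j\in\bbN_0$, and then prove each such 1D inequality by a log-corrected factorization in the spirit of the proof of Lemma~\ref{l2.1}. Expanding $f(r,\theta) = \sum_{j,\ell} F_{f,j,\ell}(r)\varphi_{j,\ell}(\theta)$ and using $\Delta f = \sum_{j,\ell}[F_{f,j,\ell}''+(n-1)r^{-1}F_{f,j,\ell}'-\lambda_j r^{-2}F_{f,j,\ell}]\varphi_{j,\ell}$, orthonormality of $\{\varphi_{j,\ell}\}$ expresses the LHS of \eqref{2.7a} as
\[\sum_{j,\ell}\int_0^R r^{n+\gamma-1}\bigl|F_{f,j,\ell}''+(n-1)r^{-1}F_{f,j,\ell}'-\lambda_j r^{-2}F_{f,j,\ell}\bigr|^2 dr.\]
Applying Lemma~\ref{l2.2} with $g(r)=r^{\gamma-2}$ and with $g(r)=r^{\gamma-2}S_N(r)$, where $S_N(r):=\sum_{k=1}^N\prod_{p=1}^k[\ln_p(\eta/r)]^{-2}$, to the first two RHS terms of \eqref{2.7a}, together with \eqref{2.13b} (same $g$) for the spherical-gradient term, yields their partial-wave expansions. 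Since $\alpha_{n,\gamma,\lambda_j}\geq A_{n,\gamma}$ for every $j$, it suffices to prove, for each $j\in\bbN_0$ and $F\in C_0^\infty((0,R))$, the 1D inequality
\begin{align*}
\int_0^R r^{n+\gamma-1}\bigl|F''+(n-1)r^{-1}F'-\lambda_j r^{-2}F\bigr|^2 dr &\geq \alpha_{n,\gamma,\lambda_j}\int_0^R\bigl[r^{n+\gamma-3}|F'|^2+\lambda_j r^{n+\gamma-5}|F|^2\bigr] dr \\
&\quad + 4^{-1}\int_0^R S_N(r)r^{n+\gamma-3}|F'|^2 dr + 2^{-1}\lambda_j\int_0^R S_N(r)r^{n+\gamma-5}|F|^2 dr,
\end{align*}
where the coefficient $2^{-1}$ in the last integral comes from combining the contributions of the $|\nabla f|^2$ log term and the $|\nabla_{\bbS^{n-1}}f|^2$ log term in \eqref{2.7a}.

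\textbf{Factorization of the 1D inequality.} The log-free version of this inequality (without the two $S_N$ terms) is established in \cite{GPPS24} via an identity
\[\int_0^R\bigl|T_{n,\gamma}^{(j)}F\bigr|^2 dr = (\text{LHS})-\alpha_{n,\gamma,\lambda_j}(\text{non-log RHS})\geq 0\]
for a suitable differential operator $T_{n,\gamma}^{(j)}$. The plan is to consider the log-corrected operator
\[T_{n,\gamma,N}^{(j)} := T_{n,\gamma}^{(j)} + c\, r^{(n+\gamma-2)/2}\sum_{k=1}^N\prod_{p=1}^k[\ln_p(\eta/r)]^{-1}\]
for an appropriate real constant $c$, in analogy with the operator $T_{N,\alpha}$ used in the proof of Lemma~\ref{l2.1}. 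Expanding $0\leq \int_0^R|T_{n,\gamma,N}^{(j)}F|^2 dr$ and integrating by parts (no boundary contributions, since $F\in C_0^\infty((0,R))$), the cross-terms between the log correction and the components of $T_{n,\gamma}^{(j)}$ are expected, after invoking the derivative identity $\frac{d}{dr}\ln_k(\eta/r) = -r^{-1}\prod_{p=1}^{k-1}[\ln_p(\eta/r)]^{-1}$ and telescoping, to produce precisely the two log-weighted integrals on the RHS with coefficients $4^{-1}$ and $2^{-1}$, while the self-square of the log correction is absorbed within them. Summing the resulting 1D inequalities over $(j,\ell)$ and replacing each $\alpha_{n,\gamma,\lambda_j}$ by $A_{n,\gamma}$ then delivers \eqref{2.7a}.

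\textbf{Optimality and main obstacle.} Optimality of $A_{n,\gamma}$ outside the two excluded pairs follows from the known sharpness of the log-free inequality \eqref{1.3}--\eqref{1.4}, proved in \cite{GPPS24}: for sharp test sequences concentrated near the origin in the minimizing partial wave, the weight $S_N(r)\to 0$ as $r\to 0^+$ renders the log contribution asymptotically subdominant relative to the leading $A_{n,\gamma}$ term, so any strict improvement of $A_{n,\gamma}$ in \eqref{2.7a} would transfer to \eqref{1.3}, contradicting sharpness there. The excluded pairs $(n,\gamma)=(2,2),(3,1)$ both correspond to the degenerate case $n+\gamma-4=0$, where $\alpha_{n,\gamma,\lambda_j}=\lambda_j$ for $j\geq 1$ while $\alpha_{n,\gamma,0}=4^{-1}(n-\gamma)^2$, and the standard extremizer construction breaks down. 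The principal obstacle is the algebraic bookkeeping in Step~2: isolating the precise operator $T_{n,\gamma}^{(j)}$ from \cite{GPPS24}, pinning down $c$, and verifying that the cross-terms, including those originating in the angular-momentum piece $-\lambda_j r^{-2}F$ of the radial Laplacian, recombine to yield exactly the prefactors $4^{-1}$ and $2^{-1}$ with no residual error.
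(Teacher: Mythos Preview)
Your spherical-harmonic reduction and the use of Lemma~\ref{l2.2} (with \eqref{2.12b}--\eqref{2.13b}) to reassemble are correct and match the paper. The divergence begins at your ``it suffices'' step: you aim to prove, for each fixed $j$, the inequality with the \emph{sharp} partial-wave constant $\alpha_{n,\gamma,\lambda_j}$ together with the log coefficients $4^{-1}$ on $|F'|^2$ and $2^{-1}\lambda_j$ on $|F|^2$. This is strictly stronger than what is needed, and there is reason to doubt it holds in general. If one runs the paper's own mechanism---two successive applications of Lemma~\ref{l2.1}---with $\alpha_{n,\gamma,\lambda_j}$ in place of $A_{n,\gamma}$, the coefficient that emerges in front of the $|F|^2$ log term is $4^{-1}\bigl[4^{-1}(n-\gamma)^2 + 2\lambda_j - \alpha_{n,\gamma,\lambda_j}\bigr]$, and this reaches $2^{-1}\lambda_j$ only when $\alpha_{n,\gamma,\lambda_j}\leq 4^{-1}(n-\gamma)^2=\alpha_{n,\gamma,0}$. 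For instance, with $n=5$, $\gamma=0$, $j=1$ one has $\alpha_{5,0,\lambda_1}=441/68>25/4=\alpha_{5,0,0}$, and the resulting coefficient is $33/17<2$. Your proposed single second-order factorization $T^{(j)}_{n,\gamma,N}=T^{(j)}_{n,\gamma}+c\,r^{(n+\gamma-2)/2}L_N$ is only sketched; you neither specify $T^{(j)}_{n,\gamma}$ nor verify the cross-term algebra, and there is no indication it would repair this shortfall. The ``principal obstacle'' you flag is therefore not just bookkeeping but a genuine question of whether the stated per-$j$ inequality is even true.

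The paper avoids this by never attempting the sharp per-$j$ statement. After expanding the left-hand side into $|F''|^2$, $|F'|^2$, $|F|^2$ integrals (eq.~\eqref{2.9a}), it applies Lemma~\ref{l2.1} once to $\int r^{\gamma+n-1}|F''|^2\,dr$, then splits the resulting $|F'|^2$ coefficient as $A_{n,\gamma}+\bigl[4^{-1}(n-\gamma)^2+2\lambda_j-A_{n,\gamma}\bigr]$ and applies Lemma~\ref{l2.1} a \emph{second} time to the nonnegative excess. The algebraic rewrite \eqref{2.11a} of $\alpha_{n,\gamma,\lambda_j}\geq A_{n,\gamma}$ handles the non-log $|F|^2$ term, and the global bound $A_{n,\gamma}\leq 4^{-1}(n-\gamma)^2$ (from $j=0$ in \eqref{2.8a}) is exactly what pushes the $|F|^2$ log coefficient up to $2^{-1}\lambda_j$. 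Thus two first-order log-Hardy steps, together with working at the level of $A_{n,\gamma}$ rather than $\alpha_{n,\gamma,\lambda_j}$, replace your hoped-for single log-corrected second-order factorization.

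Your optimality heuristic is sound; the paper implements it concretely by rescaling the $C_0^\infty((0,\infty))$ extremizing sequence of \cite{GPPS24} into $C_0^\infty((0,R))$, under which the power-weighted ratios are invariant while the log remainders are lower order.
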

\begin{proof}
By \cite[Eq. (A.25)]{GPPS24} and \cite[Lemmas 2.3 and B.3 $(i)$]{GMP24} one has
\begin{align}\lb{2.9a}
& \int_{B_n(0;R)} |x|^\g |(-\Delta f)(x)|^2\, d^n x    \no \\
& \quad =\sum_{j=0}^\infty \sum_{\ell=1}^{m(\lambda_j)}\int_0^R r^{\g+n-1}\big|-r^{1-n}\big[d/dr\big(r^{n-1} F'_{f,j,\ell}(r)\big)\big] +\lambda_j r^{-2} F_{f,j,\ell}(r)\big|^2\, dr \no\\
& \quad =\sum_{j=0}^\infty \sum_{\ell=1}^{m(\lambda_j)} \Bigg\{\int_0^R r^{\g+n-1}\big|F''_{f,j,\ell}(r)\big|^2\, dr \\
&\hspace{3cm}+[2\lambda_j+(n-1)(1-\g)]\int_0^R r^{\g+n-3}\big|F'_{f,j,\ell}(r)\big|^2\, dr \no\\
&\hspace{3cm}+\lambda_j[\lambda_j+(\g+n-4)(2-\g)]\int_0^R r^{\g+n-5} |F_{f,j,\ell}(r)|^2\, dr\Bigg\}.\no
\end{align}
Furthermore, note that \eqref{2.8a} implies
\begin{equation}
\lambda_j A_{n,\g}\leq \big[4^{-1}(n+\g-4)(n-\g)+\lambda_j\big]^2-4^{-1}(n+\g-4)^2 A_{n,\g}, \quad j \in \bbN_0, 
\end{equation}
or equivalently,
\begin{equation}\lb{2.11a}
\lambda_j A_{n,\g}\leq 4^{-1}(n+\g-4)^2\big[4^{-1}(n-\g)^2+2\lambda_j-A_{n,\g}\big]+\lambda_j[\lambda_j+(n+\g-4)(2-\g)], \quad j \in \bbN_0.
\end{equation}
Applying Lemma \ref{l2.1} and \eqref{2.11a} to \eqref{2.9a} yields
\begin{align} 
& \int_{B_n(0;R)} |x|^\g |(-\Delta f)(x)|^2\, d^n x \geq \sum_{j=0}^\infty \sum_{\ell=1}^{m(\lambda_j)} \Bigg\{4^{-1}(2-n-\g)^2\int_0^R r^{\g+n-3}\big|F'_{f,j,\ell}(r)\big|^2\, dr \no\\
&\qquad +4^{-1}\int_0^R r^{\g+n-3}\big|F'_{f,j,\ell}(r)\big|^2 \Bigg(\sum_{k=1}^{N} \prod_{p=1}^{k} [\ln_{p}(\eta/r)]^{-2} \Bigg)\, dr  \no\\
& \qquad +[2\lambda_j+(n-1)(1-\g)]\int_0^R r^{\g+n-3}\big|F'_{f,j,\ell}(r)\big|^2\, dr \no\\
&\qquad +\lambda_j[\lambda_j+(\g+n-4)(2-\g)]\int_0^R r^{\g+n-5} |F_{f,j,\ell}(r)|^2\, dr\Bigg\} \no \\
& \quad =\sum_{j=0}^\infty \sum_{\ell=1}^{m(\lambda_j)} \Bigg\{\big[4^{-1}(n-\g)^2+2\lambda_j\big]\int_0^R r^{\g+n-3}\big|F'_{f,j,\ell}(r)\big|^2\, dr \no\\
&\qquad +4^{-1}\int_0^R r^{\g+n-3}\big|F'_{f,j,\ell}(r)\big|^2 \Bigg(\sum_{k=1}^{N} \prod_{p=1}^{k} [\ln_{p}(\eta/r)]^{-2} \Bigg)\, dr  \no\\
&\qquad +\lambda_j[\lambda_j+(\g+n-4)(2-\g)]\int_0^R r^{\g+n-5} |F_{f,j,\ell}(r)|^2\, dr\Bigg\}  \no \\
& \quad =\sum_{j=0}^\infty \sum_{\ell=1}^{m(\lambda_j)} \Bigg\{A_{n,\g}\int_0^R r^{\g+n-3}\big|F'_{f,j,\ell}(r)\big|^2\, dr \no\\
&\qquad +4^{-1}\int_0^R r^{\g+n-3}\big|F'_{f,j,\ell}(r)\big|^2 \Bigg(\sum_{k=1}^{N} \prod_{p=1}^{k} [\ln_{p}(\eta/r)]^{-2} \Bigg)\, dr  \no\\
&\qquad +\big[4^{-1}(n-\g)^2+2\lambda_j-A_{n,\g}\big]\int_0^R r^{\g+n-3}\big|F'_{f,j,\ell}(r)\big|^2\, dr \no\\
&\qquad +\lambda_j[\lambda_j+(\g+n-4)(2-\g)]\int_0^R r^{\g+n-5} |F_{f,j,\ell}(r)|^2\, dr\Bigg\} \no \\
& \quad \geq\sum_{j=0}^\infty \sum_{\ell=1}^{m(\lambda_j)} \Bigg\{A_{n,\g}\int_0^R r^{\g+n-3}\big|F'_{f,j,\ell}(r)\big|^2\, dr \no\\
& \qquad +4^{-1}\int_0^R r^{\g+n-3}\big|F'_{f,j,\ell}(r)\big|^2 \Bigg(\sum_{k=1}^{N} \prod_{p=1}^{k} [\ln_{p}(\eta/r)]^{-2} \Bigg)\, dr  \no\\
& \qquad +\big\{4^{-1}(\g+n-4)^2\big[4^{-1}(n-\g)^2+2\lambda_j-A_{n,\g}\big] \no\\
&\qquad \quad +\lambda_j[\lambda_j+(\g+n-4)(2-\g)]\big\}\int_0^R r^{\g+n-5} |F_{f,j,\ell}(r)|^2\, dr \no \\
& \qquad +4^{-1}\big[4^{-1}(n-\g)^2+2\lambda_j-A_{n,\g}\big]\no \\
&\qquad \quad \times\int_0^R r^{\g+n-5}\big|F_{f,j,\ell}(r)\big|^2 \Bigg(\sum_{k=1}^{N} \prod_{p=1}^{k} [\ln_{p}(\eta/r)]^{-2} \Bigg)\, dr\Bigg\} \no\\
&\quad \geq \sum_{j=0}^\infty \sum_{\ell=1}^{m(\lambda_j)} \Bigg\{A_{n,\g}\int_0^R \big[r^{\g+n-3}\big|F'_{f,j,\ell}(r)\big|^2 + \lambda_j r^{\g+n-5}|F_{f,j,\ell}(r)|^2\big]\, dr \no\\
&\qquad +4^{-1}\int_0^R r^{\g-2}\Bigg(\sum_{k=1}^{N} \prod_{p=1}^{k} [\ln_{p}(\eta/r)]^{-2} \Bigg) 
 \big[\big|F'_{f,j,\ell}(r)\big|^2 r^{n-1}+\lambda_j |F_{f,j,\ell}(r)|^2 r^{n-3}\big]\, dr\Bigg\}    \no \\
& \qquad + \sum_{j=0}^\infty \sum_{\ell=1}^{m(\lambda_j)} 4^{-1}\lambda_j \int_0^R r^{\g+n-5}\big|F_{f,j,\ell}(r)\big|^2 \Bigg(\sum_{k=1}^{N} \prod_{p=1}^{k} [\ln_{p}(\eta/r)]^{-2} \Bigg)\, dr,   \lb{2.12a}
\end{align}
where we used the fact that $4^{-1}(n-\g)^2\geq A_{n,\g}$ (following from letting $j=0$ in \eqref{2.8a}) in the last inequality. Finally, applying Lemma \ref{l2.2} to the last inequality in \eqref{2.12a} with 
\begin{equation}
g(r)=r^{\g-2} \, \text{ and } \, g(r)=r^{\g-2} \Bigg(\sum_{k=1}^{N} \prod_{p=1}^{k} [\ln_{p}(\eta/r)]^{-2} \Bigg), \quad 
r \in (0,R), 
\end{equation}
one obtains, employing \eqref{2.12b} and \eqref{2.13b}, 
\begin{align}
& \int_{B_n(0;R)} |x|^\g |(-\Delta f)(x)|^2\, d^n x \geq A_{n,\g} \int_{B_n(0;R)} |x|^{\g-2} |(\nabla f)(x)|^2\, d^n x  \no \\
& \quad + 4^{-1}\int_{B_n(0;R)} |x|^{\g-2} \Bigg(\sum_{k=1}^{N} \prod_{p=1}^{k} [\ln_{p}(\eta/|x|)]^{-2} \Bigg)|(\nabla f)(x)|^2\, d^n x    \\
& \quad + 4^{-1}\int_{B_n(0;R)}|x|^{\gamma-4}\Bigg(\sum_{k=1}^{N} \prod_{p=1}^{k} [\ln_{p}(\eta/|x|)]^{-2} \Bigg)|(\nabla_{\bbS^{n-1}} f)(x)|^2\, d^n x.     \no
\end{align}

To prove optimality of $A_{n,\g}$ (excluding the cases $(\alpha)$ $n=2$, $\gamma=2$ and $(\beta)$ $n=3$,  $\gamma=1$), one can modify the proof of optimality found in \cite[Theorem~A.7]{GPPS24}, and we now recall the major steps of the latter. That proof begins by choosing a sequence $\{f_m\}_{m\in\bbN} \subset C_0^\infty((0,\infty))$ such that $f_m$ is real-valued and $f_m\not\equiv0$ for all $m\in \bbN$, and
\begin{equation}
\lim_{m\to\infty}\left(\int_0^\infty r^{\gamma+n-1}|f_m^{\prime \prime}(r)|^2\, dr\right)\left(\int_0^\infty r^{\gamma+n-5}|f_m(r)|^2\, dr\right)^{-1}=\frac{(2-\gamma-n)^2(4-\gamma-n)^2}{16}. 
\end{equation}
Next, depending on the values of $\gamma$ and $n$, one chooses an eigenfunction, $\varphi$, of $-\Delta_{\bbS^{n-1}}$ and defines $g_m\in C_0^\infty(\bbR^n\backslash\{0\}),\ m\in\bbN,$ by
\begin{equation}
g_m(x)=g_m(r,\theta)=f_m(r)\varphi(\theta),\quad x\in\bbR^n\backslash\{0\}.
\end{equation}
One can then show that
\begin{equation}
\dfrac{\int_{\bbR^n} |x|^\gamma|(-\Delta g_m)(x)|^2\, d^n x}{\int_{\bbR^n} |x|^{\gamma-2}|(\nabla g_m)(x)|^2\, d^n x}\underset{m\to\infty}{\longrightarrow}A_{n,\gamma},
\end{equation}
completing the proof of optimality in \cite[Theorem~A.7]{GPPS24}. To modify this proof for the current purpose of proving optimality of $A_{n,\g}$ in \eqref{2.7a}, one needs to choose a new sequence in $C_0^\infty((0,R))$ rather than $C_0^\infty((0,\infty))$ to begin with. To this end, we choose $\{f_m\}_{m\in\bbN} \subset C_0^\infty((0,\infty))$ as above and let $f_m\in C_0^\infty((0,\rho_m))$ (e.g., $\rho_m \geq  [\sup (\supp(f_m))] + 1$) for all $m\in\bbN$. We then define, for all $m\in\bbN$, $\widehat{f}_m\in C_0^\infty((0,R))$ by
\begin{equation}
\widehat{f}_m(y)=f_m(\rho_m y/R),\quad 0<y<R.
\end{equation}
One then readily verifies that
\begin{equation}
\lim_{m\to\infty}\left(\int_0^\infty r^{\gamma+n-1}|\widehat{f}_m^{\, \prime \prime}(r)|^2\, dr\right)\left(\int_0^\infty r^{\gamma+n-5}|\widehat{f}_m(r)|^2\, dr\right)^{-1}=\frac{(2-\gamma-n)^2(4-\gamma-n)^2}{16}.
\end{equation}
Thus, replacing $\{f_m\}_{m\in\bbN} \subset C_0^\infty((0,\infty))$ by $\{\widehat{f}_m\}_{m\in\bbN}\in C_0^\infty((0,R))$ in the proof of \cite[Theorem~A.7]{GPPS24} shows optimality of $A_{n,\g}$ in \eqref{2.7a}, once again, excluding the cases $(\alpha)$ $n=2,\ \gamma=2$ and 
$(\beta)$ $n=3,\ \gamma=1$.
\end{proof}

\begin{remark} \lb{r2.4}
$(i)$ The proof of Theorem \ref{t2.3} is similar to proofs found in \cite[Chs.~6, 7]{GM13}, but due to our application of \cite[Theorem 3.1 $(iii)$]{GLMP22} in Lemma \ref{l2.1}, the range of parameters has now been greatly extended in Theorem \ref{t2.3}, in particular, the two-dimensional case $n=2$ in inequality \eqref{2.7a} appears to have no precedent. \\[1mm] 
$(ii)$ In \cite[Theorems~A.5 and A.7]{GPPS24} the authors proved Theorem \ref{t2.3} without the log refinement terms (i.e., without the last two terms on the right side of (2.14)) and for a larger function space $C^{\infty}_0(\mathbb{R}^n \backslash \{0\})$. But even with this larger function space and without the log refinement terms, due to the method of proof, the authors were unable to show optimality of $A_{n, \gamma}$ in the two excluded cases in Theorem \ref{t2.3}, that is, for $(\alpha)$ $n = 2$ and $\gamma = 2$, and $(\beta)$ $n = 3$ and $\gamma = 1$. So, the optimality of $A_{n, \gamma}$ for those two cases remains open. 
\\[1mm] 
$(iii)$ We note that the inequality \eqref{2.7a} was formulated for the smallest natural function space $f \in C_{0}^{\infty}(B_n(0;R)\bs\{0\})$. Thus, at least  in principle, the optimal constants could have increased in the process when compared to the function spaces $f \in C_{0}^{\infty}(B_n(0;R))$ typically employed in \cite{AGS06}, \cite{Be08a}, \cite{Ca20}, \cite{GM11}, \cite[Part~2]{GM13}, \cite{TZ07}, etc. Interestingly enough, Theorem \ref{t2.3} demonstrates this possible increase in optimal constants is not happening with $A_{n,\gamma}$. In this context we note that \cite[Ch.~6]{GM13} derive optimality of $A_{n,\gamma}$ for $f \in C_0^\infty (B_n(0;R))$. \\[1mm] 
$(iv)$ Of course, by restriction, the principal inequalities in this paper (such as \eqref{2.7a}--\eqref{2.8b}) extend to the case where $f \in C_0^{\infty} (B_n(0;R) \backslash \{0\})$, $n \in \bbN$, $n \geq 2$, is replaced by $f \in C_0^{\infty} (\Omega \backslash \{0\})$, where $\Omega \subseteq B_n(0;R)$ is open and bounded with $0 \in \Omega$, without changing the constants in these inequalities. 
\hfill $\diamond$
\end{remark}

\medskip



\end{document}